\theoremstyle{theorem}
\newtheorem{theorem}{Theorem}
\newtheorem*{ponc}{Poncelet's theorem}
\newtheorem*{morl}{Morley's theorem}
\theoremstyle{definition}
\newtheorem*{remark}{Remark}
\numberwithin{equation}{section}
\numberwithin{figure}{section}
\begin{document}

\title{Conics associated with triangles, or how Poncelet meets Morley}

\author{Kostiantyn Drach}

\date{}

\maketitle

\begin{abstract}
We present a criterion when six points chosen on the sides of a triangle belong to the same conic. Using this tool we show how the two geometrical gems~-- celebrated Poncelet's theorem of projective geometry and incredible Morley's theorem of Euclidean geometry~-- can meet in a one construction. 
\end{abstract}

\section{Introductions}

Geometry, although being one of the oldest mathematical sciences, has these ``immortal'' results that keep attracting researchers for centuries from more and more perspectives. One of such results is well-known \textit{Poncelet's theorem}, or \textit{Poncelet's porism} (for classical facts and resent advances on the topic see, for example,~\cite{BB96}, \cite{Ber87}, \cite{DrR11}, \cite{Dui10}, \cite{Fl09}, \cite{LT07}, and references therein). Discovered by Jean-Victor Poncelet in 1813 while being a military prisoner in Saratov, this theorem states the following.

\begin{ponc}
Let $\mathcal C_1$ and $\mathcal C_2$ be two plane conics. If there exist a \emph{Poncelet chain} $P_1P_2\ldots P_n \ldots$, that is a broken line whose vertexes $P_1, P_2, \ldots$ lie on $\mathcal C_1$ and whose links $P_1P_2, P_2P_3, \ldots$, or their continuations, touch $\mathcal C_2$, that closes up in $n$ steps, that is $P_1 = P_{n+1}$, then for any starting point $P_1$ on $\mathcal C_1$ the corresponding Poncelet chain will close up in $n$ steps.
\end{ponc}

Hence, a closed Poncelet chain $P_1\ldots P_n$ forms an $n$-gon simultaneously inscribed in the first conic $\mathcal C_1$ and circumscribed around the second conic $\mathcal C_2$. If one such a polygon exists, then there exists infinitely many (see Fig.~\ref{pic1} (a)).

\begin{figure}[h]
{\footnotesize
\begin{center}

\begin{minipage}[H]{0.49\linewidth}
\begin{center}

\definecolor{ffqqqq}{rgb}{1,0,0}
\definecolor{qqqqff}{rgb}{0,0,1}
\begin{tikzpicture}[line cap=round,line join=round,>=triangle 45,x=1.0cm,y=1.0cm,scale=0.35]
\clip(-1.77,-6.89) rectangle (15.84,7.83);
\draw [rotate around={4.47:(7.03,0.54)},line width=1pt,color=qqqqff] (7.03,0.54) ellipse (7.73cm and 6.68cm);
\draw [rotate around={34.27:(8.82,0.22)},line width=1pt,color=ffqqqq] (8.82,0.22) ellipse (2.43cm and 1.83cm);
\draw (0.71,4.25)-- (14.73,1.13);
\draw (14.73,1.13)-- (2.53,-4.98);
\draw (2.53,-4.98)-- (11.04,6.33);
\draw (11.04,6.33)-- (11.12,-5.05);
\draw (11.12,-5.05)-- (0.72,4.26);
\draw [dash pattern=on 3pt off 3pt] (8.53,7.12)-- (13.03,-3.55);
\draw [dash pattern=on 3pt off 3pt] (-0.65,1.06)-- (14.32,2.87);
\draw [dash pattern=on 3pt off 3pt] (-0.65,1.06)-- (13.03,-3.55);
\draw [dash pattern=on 3pt off 3pt] (8.53,7.12)-- (4.96,-5.94);
\draw [dash pattern=on 3pt off 3pt] (14.32,2.87)-- (4.96,-5.94);
\draw (-1.24,-2.37) node[anchor=north west] {$\mathcal C_1$};
\draw (7.08,0.43) node[anchor=north west] {$\mathcal C_2$};
\fill [color=black] (0.71,4.25) circle (1.0pt);
\draw[color=black] (0.41,4.77) node {$P_1$};
\fill [color=black] (14.73,1.13) circle (1.0pt);
\draw[color=black] (15.4,1.19) node {$P_2$};
\fill [color=black] (2.53,-4.98) circle (1.0pt);
\draw[color=black] (2.34,-5.51) node {$P_3$};
\fill [color=black] (11.04,6.33) circle (1.0pt);
\draw[color=black] (11.5,6.68) node {$P_4$};
\fill [color=black] (11.12,-5.05) circle (1.0pt);
\draw[color=black] (11.59,-5.47) node {$P_5$};
\end{tikzpicture}

\end{center}

\end{minipage}
\hfill 
\begin{minipage}[h]{0.49\linewidth}
\begin{center}

\definecolor{ffqqqq}{rgb}{1,0,0}
\definecolor{qqzzqq}{rgb}{0,0.6,0}
\definecolor{qqqqff}{rgb}{0,0,1}
\definecolor{zzttqq}{rgb}{0.6,0.2,0}
\definecolor{uququq}{rgb}{0.25,0.25,0.25}
\begin{tikzpicture}[line cap=round,line join=round,>=triangle 45,x=1.0cm,y=1.0cm,scale=0.55]
\clip(2.05,-3.96) rectangle (13.71,5.12);
\fill[color=zzttqq,fill=zzttqq,fill opacity=0.1] (8.53,0.14) -- (9.12,-1.4) -- (10.15,-0.12) -- cycle;
\draw [shift={(2.98,-3.02)},color=qqqqff,fill=qqqqff,fill opacity=0.1] (0,0) -- (0:1.48) arc (0:14.82:1.48) -- cycle;
\draw [shift={(2.98,-3.02)},color=qqqqff,fill=qqqqff,fill opacity=0.1] (0,0) -- (14.82:1.27) arc (14.82:29.64:1.27) -- cycle;
\draw [shift={(2.98,-3.02)},color=qqqqff,fill=qqqqff,fill opacity=0.1] (0,0) -- (29.64:1.48) arc (29.64:44.45:1.48) -- cycle;
\draw [shift={(10.4,4.26)},color=qqzzqq,fill=qqzzqq,fill opacity=0.1] (0,0) -- (-135.55:1.27) arc (-135.55:-114.38:1.27) -- cycle;
\draw [shift={(10.4,4.26)},color=qqzzqq,fill=qqzzqq,fill opacity=0.1] (0,0) -- (-114.38:1.06) arc (-114.38:-93.21:1.06) -- cycle;
\draw [shift={(10.4,4.26)},color=qqzzqq,fill=qqzzqq,fill opacity=0.1] (0,0) -- (-93.21:1.27) arc (-93.21:-72.04:1.27) -- cycle;
\draw [shift={(12.76,-3.02)},color=ffqqqq,fill=ffqqqq,fill opacity=0.1] (0,0) -- (107.96:1.27) arc (107.96:131.97:1.27) -- cycle;
\draw [shift={(12.76,-3.02)},color=ffqqqq,fill=ffqqqq,fill opacity=0.1] (0,0) -- (131.97:1.06) arc (131.97:155.99:1.06) -- cycle;
\draw [shift={(12.76,-3.02)},color=ffqqqq,fill=ffqqqq,fill opacity=0.1] (0,0) -- (155.99:1.27) arc (155.99:180:1.27) -- cycle;
\draw [line width=1pt] (2.98,-3.02)-- (10.4,4.26);
\draw [line width=1pt] (10.4,4.26)-- (12.76,-3.02);
\draw [line width=1pt] (2.98,-3.02)-- (12.76,-3.02);
\draw (10.4,4.26)-- (9.99,-3.02);
\draw (10.4,4.26)-- (7.1,-3.02);
\draw (2.98,-3.02)-- (11.24,1.68);
\draw (2.98,-3.02)-- (11.99,-0.64);
\draw (12.76,-3.02)-- (8.17,2.08);
\draw (12.76,-3.02)-- (6.03,-0.02);
\draw [color=zzttqq] (8.53,0.14)-- (9.12,-1.4);
\draw [color=zzttqq] (9.12,-1.4)-- (10.15,-0.12);
\draw [color=zzttqq] (10.15,-0.12)-- (8.53,0.14);
\draw [shift={(10.4,4.26)},color=qqzzqq] (-135.55:1.27) arc (-135.55:-114.38:1.27);
\draw [shift={(10.4,4.26)},color=qqzzqq] (-135.55:1.16) arc (-135.55:-114.38:1.16);
\draw [shift={(10.4,4.26)},color=qqzzqq] (-114.38:1.06) arc (-114.38:-93.21:1.06);
\draw [shift={(10.4,4.26)},color=qqzzqq] (-114.38:0.95) arc (-114.38:-93.21:0.95);
\draw [shift={(10.4,4.26)},color=qqzzqq] (-93.21:1.27) arc (-93.21:-72.04:1.27);
\draw [shift={(10.4,4.26)},color=qqzzqq] (-93.21:1.16) arc (-93.21:-72.04:1.16);
\draw [shift={(12.76,-3.02)},color=ffqqqq] (107.96:1.27) arc (107.96:131.97:1.27);
\draw [shift={(12.76,-3.02)},color=ffqqqq] (107.96:1.16) arc (107.96:131.97:1.16);
\draw [shift={(12.76,-3.02)},color=ffqqqq] (107.96:1.06) arc (107.96:131.97:1.06);
\draw [shift={(12.76,-3.02)},color=ffqqqq] (131.97:1.06) arc (131.97:155.99:1.06);
\draw [shift={(12.76,-3.02)},color=ffqqqq] (131.97:0.95) arc (131.97:155.99:0.95);
\draw [shift={(12.76,-3.02)},color=ffqqqq] (131.97:0.85) arc (131.97:155.99:0.85);
\draw [shift={(12.76,-3.02)},color=ffqqqq] (155.99:1.27) arc (155.99:180:1.27);
\draw [shift={(12.76,-3.02)},color=ffqqqq] (155.99:1.16) arc (155.99:180:1.16);
\draw [shift={(12.76,-3.02)},color=ffqqqq] (155.99:1.06) arc (155.99:180:1.06);
\draw [dash pattern=on 2pt off 2pt] (8.53,0.14)-- (12.76,-3.02);
\draw [dash pattern=on 2pt off 2pt] (2.98,-3.02)-- (10.15,-0.12);
\draw [dash pattern=on 2pt off 2pt] (10.4,4.26)-- (9.12,-1.4);
\fill [color=uququq] (2.98,-3.02) circle (1.0pt);
\draw[color=uququq] (2.77,-3.18) node {$A$};
\fill [color=uququq] (10.4,4.26) circle (1.0pt);
\draw[color=uququq] (10.56,4.51) node {$B$};
\fill [color=uququq] (12.76,-3.02) circle (1.0pt);
\draw[color=uququq] (12.97,-3.18) node {$C$};
\fill [color=uququq] (11.24,1.68) circle (1.0pt);
\fill [color=uququq] (11.99,-0.64) circle (1.0pt);
\fill [color=uququq] (7.1,-3.02) circle (1.0pt);
\fill [color=uququq] (9.99,-3.02) circle (1.0pt);
\fill [color=uququq] (8.17,2.08) circle (1.0pt);
\fill [color=uququq] (6.03,-0.02) circle (1.0pt);
\fill [color=uququq] (10.15,-0.12) circle (1.0pt);
\draw[color=uququq] (10.61,0.07) node {$U_1$};
\fill [color=uququq] (9.12,-1.4) circle (1.0pt);
\draw[color=uququq] (9.25,-1.86) node {$V_1$};
\fill [color=uququq] (8.53,0.14) circle (1.0pt);
\draw[color=uququq] (8.14,0.44) node {$W_1$};
\end{tikzpicture}

\end{center}
\end{minipage}
\begin{minipage}[h]{1\linewidth}
\begin{tabular}{p{0.49\linewidth}p{0.49\linewidth}}
\vskip 0.05mm \centering (a) & \vskip 0.05mm \centering (b)
\end{tabular}
\end{minipage}

\caption{(a) The closed in $5$ steps Poncelet chain $P_1\ldots P_5$ for the nested ellipses $\mathcal C_1$ and $\mathcal C_2$; (b) Morley's trisector theorem.}
\label{pic1}
\end{center}
}
\end{figure}
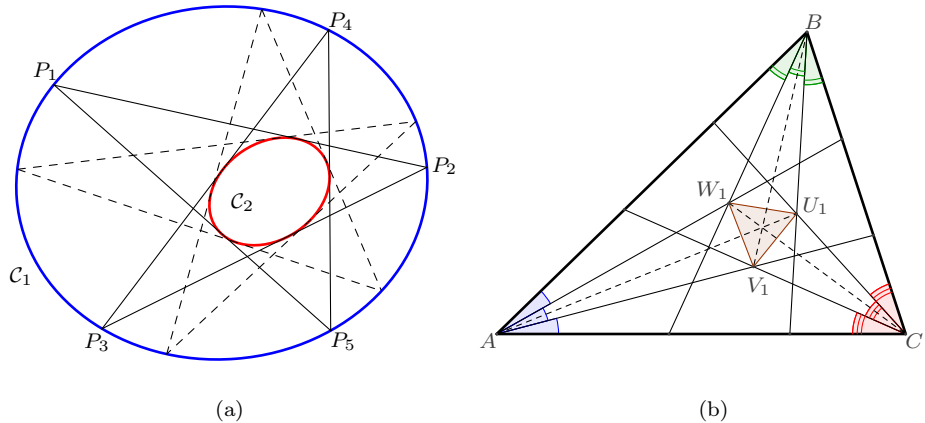

Another undoubtedly ``immortal'' geometrical result was born in 1899 with the help of Frank Morley, and is known now as \textit{Morley's theorem} (see the classical overview~\cite{OB78}, and some resent proofs~\cite{C98},~\cite{Con06}). Overlooked by thousands of mathematicians before Morley, this theorem presents the following astonishing fact from the geometry of triangles.

\begin{morl}
For any triangle $\triangle ABC$ the three points $U_1$, $V_1$, $W_1$ of intersection of the adjacent angle \emph{trisectors}, that is rays emanating from the vertexes of $\triangle ABC$ and dividing the corresponding angles into three equal parts, form an equilateral triangle $\triangle U_1V_1W_1$, called the \emph{Morley triangle} (see Fig.~\ref{pic1} (b)).
\end{morl}

Further we shall see how the two facts described above can meet each other, namely, how Poncelet's porism appears in the construction of the Morley triangle. For such a purpose in the next section we present a useful criterion for six points to be \textit{conconic}, that is lie on a single conic.

\section{When six points are conconic?}

It is well known (see~\cite{AkZ07}, or~\cite{Ber87}) that any five distinct points (or lines) in general position on the plane uniquely determine a non-degenerate conic passing through (respectively, touching) these points (respectively, lines). However, if three or more points are collinear (respectively, three or more lines intersect in the same point), then the conic can degenerate and can even fail to be unique. Everything said above also holds on the \textit{projective plane}, that is the Euclidean plane equipped with the so-called \textit{line at infinity} consisting of \textit{points at infinity} in which intersect all families of parallel lines. For the detailed exposition of projective geometry we send the reader to~\cite{Cox03}.     

Since five points determine a conic, the right question to ask is when a sixth point lie on a single conic. We now state one of the possible answers to this question. Recall that a ray from a vertex of a triangle through a point on the opposite side (or its continuation) called a \textit{cevian}.

\begin{theorem}
\label{criterion}
Let $AA_1$, $BB_1$, $CC_1$ be three cevians of $\triangle ABC$ mutually meeting at the points $X_1 = BB_1 \cap CC_1$, $Y_1 = AA_1 \cap CC_1$, and $Z_1 = AA_1 \cap BB_1$. Suppose also that $AA_2$, $BB_2$, $CC_2$ is an another triple of cevians meeting at the similarly constructed points $X_2$, $Y_2$, and $Z_2$. If we further put $U_1 := BB_1 \cap CC_2$, $V_1 := CC_1 \cap AA_2$, and $W_1 := AA_1 \cap BB_2$ (see Fig.~\ref{pic2} (a)), then the following conditions are equivalent:
\begin{enumerate}
\item
\label{outer6}
$A_1$, $A_2$, $B_1$, $B_2$, $C_1$, $C_2$ lie on a single conic $\mathcal C_0$;

\item
\label{inner6}
$X_1$, $X_2$, $Y_1$, $Y_2$, $Z_1$, $Z_2$ lie on a single conic $\mathcal C_1$;

\item
\label{brian}
$AA_1$, $AA_2$, $BB_1$, $BB_2$, $CC_1$, and $CC_2$ touch the same conic $\mathcal C_2$;

\item
\label{2ndMC}
$AU_1$, $BV_1$, and $CW_1$ intersect in a single point.

\end{enumerate}
 
\end{theorem}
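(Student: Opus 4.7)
The plan is to introduce barycentric coordinates relative to $\triangle ABC$ and to parametrize each cevian by its Ceva ratio on the opposite side: $\alpha_i := \overline{BA_i}/\overline{A_iC}$, $\beta_i := \overline{CB_i}/\overline{B_iA}$, $\gamma_i := \overline{AC_i}/\overline{C_iB}$ for $i=1,2$. In these coordinates the points and cevians become $A_i=(0:1:\alpha_i)$, $B_i=(\beta_i:0:1)$, $C_i=(1:\gamma_i:0)$, and $AA_i\colon z=\alpha_i y$, $BB_i\colon x=\beta_i z$, $CC_i\colon y=\gamma_i x$. I will then show that each of the four conditions is equivalent to the single symmetric identity
\[
(\star)\qquad \alpha_1\alpha_2\,\beta_1\beta_2\,\gamma_1\gamma_2=1,
\]
from which their mutual equivalence is immediate.

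First, (\ref{outer6})~$\Leftrightarrow$~$(\star)$ is exactly Carnot's theorem for a conic cutting the sides of a triangle in six points. For (\ref{brian})~$\Leftrightarrow$~$(\star)$ I would invoke the tangential (sine-)version of Carnot's theorem and, using the sine rule in the subtriangles $\triangle ABA_i$ and $\triangle ACA_i$, convert each sine quotient $\sin\angle BAA_i/\sin\angle A_iAC$ into $(\sin B/\sin C)\,\alpha_i$; the trigonometric factors telescope over the three vertices, so $(\star)$ reappears. For (\ref{2ndMC})~$\Leftrightarrow$~$(\star)$ a direct computation yields $U_1=(\beta_1:\beta_1\gamma_2:1)$, $V_1=(1:\gamma_1:\alpha_2\gamma_1)$, $W_1=(\alpha_1\beta_2:1:\alpha_1)$, and hence the new cevians $AU_1$, $BV_1$, $CW_1$ meet the opposite sides at Ceva ratios $1/(\beta_1\gamma_2)$, $1/(\alpha_2\gamma_1)$, $1/(\alpha_1\beta_2)$; Ceva's theorem then reduces their concurrency to the product of these three numbers being $1$, which is exactly $(\star)$.

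The hard part will be (\ref{inner6})~$\Leftrightarrow$~$(\star)$, since the six points $X_i,Y_i,Z_i$ do not lie on the sides of any one triangle and Carnot's criterion cannot be applied verbatim. My plan for this step is to invoke Pascal's theorem and its Braikenridge--Maclaurin converse on the hexagon $X_1Y_1Z_1X_2Y_2Z_2$, whose cyclic order is tailor-made for the configuration: the consecutive pairs $X_iY_i$ and $Y_iZ_i$ lie on the cevians $CC_i$ and $AA_i$ respectively, so two of the three Pascal intersections degenerate to the vertices $C=CC_1\cap CC_2$ and $A=AA_1\cap AA_2$. Pascal's criterion thus reduces to requiring that the remaining intersection $Q:=Z_1X_2\cap Z_2X_1$ lie on the line $AC$. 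A short cross-product computation of $Q$ in barycentric coordinates factors its middle coordinate as $Q_y=(\beta_1-\beta_2)\bigl(1-\alpha_1\alpha_2\beta_1\beta_2\gamma_1\gamma_2\bigr)$, so under the non-degeneracy $\beta_1\neq\beta_2$ (equivalent to $B_1\neq B_2$) the vanishing of $Q_y$ is precisely $(\star)$. The main obstacle is exactly this algebraic step: extracting the factor $1-\alpha_1\alpha_2\beta_1\beta_2\gamma_1\gamma_2$ cleanly out of $Q_y$ is not automatic, but once it is in hand the whole chain of equivalences closes.
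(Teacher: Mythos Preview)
Your barycentric strategy is sound and, once the factorisation of $Q_y$ is carried out, complete. Indeed, with $Z_1=(\alpha_1\beta_1:1:\alpha_1)$, $X_2=(\beta_2:\beta_2\gamma_2:1)$ (and the index-swapped pair), one finds
\[
Q_y=\beta_2(\alpha_1\beta_1\gamma_2-1)(1-\alpha_2\beta_1\gamma_1)-\beta_1(\alpha_2\beta_2\gamma_1-1)(1-\alpha_1\beta_2\gamma_2)
=(\beta_1-\beta_2)\bigl(1-\alpha_1\alpha_2\beta_1\beta_2\gamma_1\gamma_2\bigr),
\]
exactly as you predict; the four cross terms cancel in pairs. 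So the ``main obstacle'' dissolves, and under $B_1\neq B_2$ you get $(\ref{inner6})\Leftrightarrow(\star)$ via Braikenridge--Maclaurin, as desired.

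This is, however, not the route the paper takes. The paper avoids barycentric coordinates and the reduction to a single Carnot-type invariant altogether. For $(\ref{outer6})\Leftrightarrow(\ref{2ndMC})$ it sends $BC$ to the line at infinity by a projective map, sets up affine coordinates so that $\bar b\perp\bar c$ and two of the six side-points sit at unit distance, and then reads off $(\ref{2ndMC})$ as the equality $p=q$ of two auxiliary parameters, which by Pascal's theorem is in turn equivalent to~$(\ref{outer6})$. Brianchon's theorem dispatches $(\ref{brian})\Leftrightarrow(\ref{2ndMC})$ in one line. The real divergence is in $(\ref{inner6})\Leftrightarrow(\ref{brian})$: the paper uses \emph{Poncelet's closure theorem} itself. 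It takes the conic $\mathcal C_2'$ tangent to five of the six cevians, observes that $\triangle X_1Y_1Z_1$ is inscribed in $\mathcal C_1$ and circumscribed about $\mathcal C_2'$, and then invokes Poncelet to force the chain starting at $X_2$ to close, yielding the sixth tangency.

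Your approach has the virtue of uniformity: all four conditions collapse to the symmetric identity $(\star)$, so the logical structure is transparent and the computation self-contained. It is close in spirit to the barycentric proof of Szilasi and the Carnot-based proof of Barali\'c that the paper cites. The paper's approach, by contrast, is more synthetic and, crucially, makes Poncelet's theorem part of the \emph{proof} of the criterion that is later applied to Morley's configuration---which is precisely the thematic point of the article.
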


\begin{remark}
Some implications of Theorem~\ref{criterion} seem to be folkloric statements rediscovered independently by several authors. Although the author of the present note was aware about the statement of the theorem in 2007, the equivalence of \ref{outer6}, \ref{brian}, and \ref{2ndMC} first appeared in~\cite{Dol11}. The equivalence of \ref{outer6}, \ref{inner6}, and \ref{brian} was proved in~\cite{Sz12} using barycentric coordinates. Recently in~\cite{Bar14} D.~Barali\'c also proved the equivalence of \ref{outer6}, \ref{inner6}, and \ref{brian} by using a smart application of Carnot's theorem. Our approach will be more straightforward then in the previous works.
\end{remark}

\begin{figure}[h]
{\tiny
\begin{center}

\begin{minipage}[H]{0.49\linewidth}
\begin{center}

\definecolor{qqqqff}{rgb}{0,0,1}
\definecolor{ffqqqq}{rgb}{1,0,0}
\definecolor{qqzzqq}{rgb}{0,0.6,0}
\definecolor{uququq}{rgb}{0.25,0.25,0.25}
\begin{tikzpicture}[line cap=round,line join=round,>=triangle 45,x=1.0cm,y=1.0cm, scale = 0.5]
\clip(-1.83,-3.5) rectangle (10.39,6.82);
\draw [line width=0.7pt] (-1.2,-2.66)-- (8.18,6.28);
\draw [line width=0.7pt] (8.18,6.28)-- (9.66,-2.66);
\draw [line width=0.7pt] (9.66,-2.66)-- (-1.2,-2.66);
\draw [rotate around={-158.33:(5.89,0.28)},line width=0.7pt,color=qqzzqq] (5.89,0.28) ellipse (4.06cm and 3.08cm);
\draw (9.66,-2.66)-- (4.98,3.23);
\draw (9.66,-2.66)-- (2.06,0.45);
\draw (-1.2,-2.66)-- (8.74,2.91);
\draw (-1.2,-2.66)-- (9.34,-0.72);
\draw (8.18,6.28)-- (6.82,-2.66);
\draw (8.18,6.28)-- (3.6,-2.66);
\draw [rotate around={-159.44:(6.42,0.18)},line width=0.7pt,color=ffqqqq] (6.42,0.18) ellipse (3.27cm and 1.84cm);
\draw [rotate around={103.28:(6.22,0.03)},line width=0.7pt,color=qqqqff] (6.22,0.03) ellipse (1.3cm and 0.94cm);
\draw [dash pattern=on 2pt off 2pt] (5.53,1.11)-- (9.66,-2.66);
\draw [dash pattern=on 2pt off 2pt] (-1.2,-2.66)-- (7.28,0.34);
\draw [dash pattern=on 2pt off 2pt] (8.18,6.28)-- (6.28,-1.28);
\draw (2.52,3) node[anchor=north west] {$\mathcal C_0$};
\draw (3.8,2) node[anchor=north west] {$\mathcal C_1$};
\draw (5.2,0.7) node[anchor=north west] {$\mathcal C_2$};
\fill [color=uququq] (-1.2,-2.66) circle (1.0pt);
\draw (-1.44,-2.71) node {$A$};
\fill [color=uququq] (8.18,6.28) circle (1.0pt);
\draw (8.33,6.6) node {$B$};
\fill [color=uququq] (9.66,-2.66) circle (1.0pt);
\draw (10,-2.74) node {$C$};
\fill [color=uququq] (2.06,0.45) circle (1.0pt);
\draw (1.73,0.73) node {$C_1$};
\fill [color=uququq] (4.98,3.23) circle (1.0pt);
\draw (4.75,3.58) node {$C_2$};
\fill [color=uququq] (8.74,2.91) circle (1.0pt);
\draw (9.25,3.05) node {$A_1$};
\fill [color=uququq] (9.34,-0.72) circle (1.0pt);
\draw (9.81,-0.75) node {$A_2$};
\fill [color=uququq] (3.6,-2.66) circle (1.0pt);
\draw (3.56,-3.1) node {$B_2$};
\fill [color=uququq] (6.82,-2.66) circle (1.0pt);
\draw (7.04,-3.1) node {$B_1$};
\fill [color=uququq] (5.98,1.97) circle (1.5pt);
\draw (5.9,2.7) node {$X_2$};
\fill [color=uququq] (7.57,2.25) circle (1.5pt);
\draw (7.92,1.9) node {$Z_1$};
\fill [color=uququq] (8.27,-0.91) circle (1.5pt);
\draw (8.35,-0.39) node {$Y_2$};
\fill [color=uququq] (6.99,-1.57) circle (1.5pt);
\draw (7.3,-2.05) node {$X_1$};
\fill [color=uququq] (4.1,-1.68) circle (1.5pt);
\draw (4.3,-2.15) node {$Z_2$};
\fill [color=uququq] (3.38,-0.09) circle (1.5pt);
\draw (3.2,0.34) node {$Y_1$};
\fill [color=uququq] (5.53,1.11) circle (1.0pt);
\draw (5.2,1.29) node {{\tiny $W_1$}};
\fill [color=uququq] (7.28,0.34) circle (1.0pt);
\draw (7.74,0.44) node {{\tiny $U_1$}};
\fill [color=uququq] (6.28,-1.28) circle (1.0pt);
\draw (6.29,-1.56) node {{\tiny $V_1$}};
\draw (6.75,-0.3) node {{\tiny $O$}};
\end{tikzpicture}

\end{center}

\end{minipage}
\hfill 
\begin{minipage}[h]{0.49\linewidth}
\begin{center}

\definecolor{uququq}{rgb}{0.25,0.25,0.25}
\begin{tikzpicture}[line cap=round,line join=round,>=triangle 45,x=1.0cm,y=1.0cm, scale = 0.55]
\clip(1.74,-5.17) rectangle (12.83,4.04);
\draw [line width=0.8pt,domain=1.74:12.83] plot(\x,{(-13.8-0*\x)/7.42});
\draw [line width=0.8pt] (5.04,-5.17) -- (5.04,4.04);
\draw (4.45,-1.9) node[anchor=north west] {$\bar{A}$};
\draw (3.4,0.2) node[anchor=north west] {$\bar{C}_1\!(0,\!1)$};
\draw (6.83,-1.9) node[anchor=north west] {$\bar B_2\!(1,\!0)$};
\draw (4.56,4) node[anchor=north west] {$\bar b$};
\draw (12.2,-1.85) node[anchor=north west] {$\bar c$};
\draw [domain=5.04:12.83] plot(\x,{(-35.12--4.2*\x)/7.5});
\draw [domain=5.04:12.83] plot(\x,{(-37.68--5.92*\x)/4.21});
\draw (9.9,-1.9) node[anchor=north west] {$\bar B_1\!(b_1\!,\!0)$};
\draw (3.2,2.6) node[anchor=north west] {$\bar C_2\!(0,\!c_2\!)$};
\draw [domain=1.74:12.83] plot(\x,{(-56.07--4.2*\x)/7.5});
\draw [domain=1.74:12.83] plot(\x,{(-21.11--5.92*\x)/4.21});
\draw (12.2,3.15) node[anchor=north west] {$\bar a_2$};
\draw (8.18,4) node[anchor=north west] {$\bar a_1$};
\draw (5.0,-4.55) node[anchor=north west] {$\bar P (0,\!p)$};
\draw (1.55,-2.4) node[anchor=north west] {$\bar Q (q,\!0)$};
\draw (10.0,2.8) node[anchor=north west] {$\bar U_1 \!(b_1\!,\! c_2)$};
\draw (5.02,1.88) node[anchor=north west] {$\bar W_1\!\left(\!1,\!\frac{c_2}{q}\!\right)$};
\draw (8.05,0.1) node[anchor=north west] {$\bar V_1\!\left(\!\frac{b_1}{p}\!,\! 1\!\right)$};
\draw (10.02,-1.86) -- (10.02,4.04);
\draw (7.26,-1.86) -- (7.26,4.04);
\draw [domain=5.04:12.827946396871424] plot(\x,{(--1.43-0*\x)/3.96});
\draw [domain=5.04:12.827946396871424] plot(\x,{(--10.34-0*\x)/4.98});
\draw [dash pattern=on 2pt off 2pt] (5.04,-1.86)-- (10.02,2.07);
\draw [dash pattern=on 2pt off 2pt] (7.26,1.26)-- (9,1.26);
\draw [dash pattern=on 2pt off 2pt] (9,1.26)-- (9,0.36);
\draw (8.55,1.85) node[anchor=north west] {$\bar O$};
\draw [dash pattern=on 2pt off 2pt] (5.04,0.36)-- (7.26,-1.86);
\draw [dash pattern=on 2pt off 2pt] (2.24,-1.86)-- (5.04,-4.65);
\fill [color=uququq] (5.04,-1.86) circle (1.5pt);
\fill [color=uququq] (7.26,-1.86) circle (1.5pt);
\fill [color=uququq] (5.04,0.36) circle (1.5pt);
\fill [color=uququq] (10.02,-1.86) circle (1.5pt);
\fill [color=uququq] (5.04,2.07) circle (1.5pt);
\fill [color=uququq] (2.24,-1.86) circle (1.5pt);
\fill [color=uququq] (5.04,-4.65) circle (1.5pt);
\fill [color=uququq] (10.02,2.07) circle (1.5pt);
\fill [color=uququq] (7.26,1.26) circle (1.5pt);
\fill [color=uququq] (9,0.36) circle (1.5pt);
\fill [color=uququq] (9,1.26) circle (1.5pt);
\end{tikzpicture}

\end{center}
\end{minipage}
\begin{minipage}[h]{1\linewidth}
\begin{tabular}{p{0.49\linewidth}p{0.49\linewidth}}
\vskip 0.05mm \centering {\footnotesize (a)} & \vskip 0.05mm \centering {\footnotesize (b)}
\end{tabular}
\end{minipage}

\caption{(a) The configuration of Theorem~\ref{criterion}; (b) The configuration of Theorem~\ref{criterion} after a suitable projective transformation.}
\label{pic2}
\end{center}
}
\end{figure}
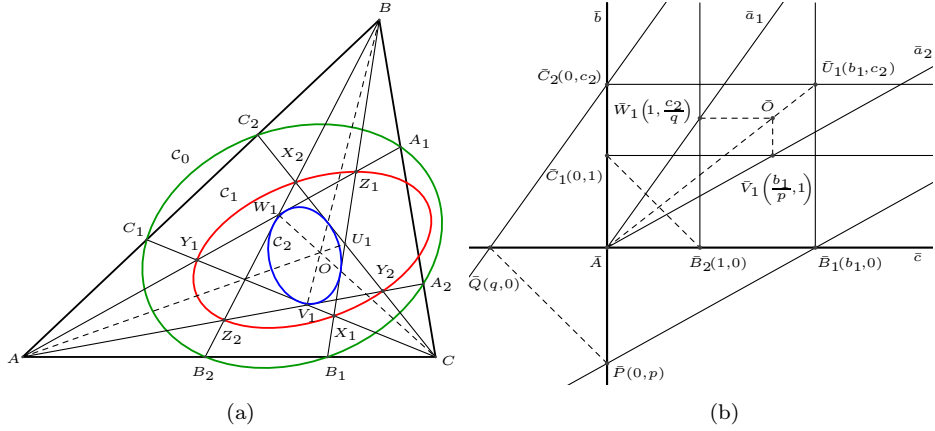


\begin{proof}

Let us start with showing the equivalence of~\ref{outer6} and~\ref{2ndMC}. Since the construction of the theorem is invariant under projective transformations, we can apply a particular one to help us. More precisely, let us make the transformation that maps the line $BC$ to the line at infinity. This is a standard trick in projective geometry that simplifies the whole picture. If we denote the resulting image of an arbitrary point $X$ as $\bar X$, then $\bar B$, $\bar C$, $\bar A_1$, $\bar A_2$ are the points at infinity (see Fig.~\ref{pic2} (b)). Put $\bar b$, $\bar c$, $\bar a_1$, and $\bar a_2$ for the images of the lines, respectively, $AB$, $AC$, $AA_1$, and $AA_2$. Furthermore, we can assume that the lines $\bar b$ and $\bar c$ are perpendicular, and $\bar A \bar C_1 = \bar A \bar B_2 = 1$.

Introduce the coordinate system with the origin at $\bar A$ and axises $\bar b$ and $\bar c$, and set the coordinates for the points as $\bar B_1 (b_1, 0)$, $\bar C_2 (0, c_2)$, $\bar P (0, p)$, $\bar Q (q, 0)$ with $P := AB \cap A_2 B_1$ and $Q := AC \cap A_1 C_2$ (possibly being points at infinity). 

Since the points $A_1$ and $A_2$ were sent to infinity, we get $\bar a_1 \parallel \bar Q \bar C_2$ and $\bar a_2 \parallel \bar B_1 \bar P$. Using these relations, it is straightforward to show that the points $\bar U_1$, $\bar V_1$, and $\bar W_1$ have the following coordinates: $\bar U_1 = (b_1, c_2)$, $\bar V_1 = (b_1/p, 1)$, and $\bar W_1 = (1, c_2/q)$. 

If $O = BV_1 \cap CW_1$, then the condition~\ref{2ndMC} is equivalent to $O \in AU_1$, or $\bar O \in \bar A \bar U_1$. It is easy to see that $\bar O = (b_1/p, c_2/q)$. 

Therefore $\bar O \in \bar A \bar U_1$ if and only if $p = q$. This equality is equivalent to the fact that $\bar Q \bar P \parallel \bar C_1 \bar B_2$. From here we can get the equivalence of~\ref{outer6} and~\ref{2ndMC} by incorporating \textit{Pascal's theorem} stating that \textit{on the projective plane the six vertexes of a hexagon lie on the same conic if and only if the three points of intersection of the opposite sides of the hexagon are collinear} (see~\cite{AkZ07}, or~\cite{Cox03}). Using this fundamental fact, $\bar Q \bar P \parallel \bar C_1 \bar B_2$ is, in turn, equivalent to the fact that $A_1$, $A_2$, $B_1$, $B_2$, $C_1$, and $C_2$ lie on a single conic $\mathcal C_0$. The equivalence $\ref{outer6}\Leftrightarrow\ref{2ndMC}$ is proved.

The equivalence $\ref{brian}\Leftrightarrow\ref{2ndMC}$ is the corollary of an another fundamental fact, namely, \textit{Brianchon's theorem} which states that \textit{on the projective plane the sidelines of a hexagon are tangent to a single conic if and only if three lines joining the opposite vertexes meet at a single point} (see~\cite{AkZ07}, \cite{Cox03}). 

Finally, let us prove the equivalence $\ref{inner6}\Leftrightarrow\ref{brian}$. Suppose~\ref{inner6} holds. Consider the conic $\mathcal C_2'$ that simultaneously touches all the cevians except $CC_2$. We know that $\mathcal C_2'$ is uniquely defined. Hence, we have two conics $\mathcal C_1$ and $\mathcal C_2'$ and the triangle $X_1Y_1Z_1$ whose vertexes lie on $\mathcal C_1$ and whose sides are tangent to $\mathcal C_2'$. Therefore, by Poncelet's theorem, starting from any point on $\mathcal C_1$ the corresponding Poncelet chain always closes up in three steps. Let us pick the point $X_2$. Since the Poncelet chain starting at $X_2$ must close in three steps, we get that $Y_2Z_2$ is also tangent to $\mathcal C_2'$. Thus $\mathcal C_2' = \mathcal C_2$, and the implication $\ref{inner6} \Rightarrow \ref{brian}$ is proved. The reverse implication can be proved by reversing the arguments above. We note that this shortcut with Poncelet's theorem also appeared in~\cite{Bar14} (see~\cite{HH14} for exploiting this idea to give a proof for Poncelet's theorem).

The last thing we should note here is that we omitted possible degenerate cases since the computation in such cases can be easily restored by the reader. Theorem~\ref{criterion} is proved.
\end{proof}

\section{What about Morley?}

Now we will see how both celebrated facts mentioned in the introduction can meet each other. Again, let $U_1$, $V_1$, and $W_1$ be intersection points of the adjacent angle trisectors of $\triangle ABC$ from the Morley configuration. It is known that the lines $AU_1$, $BV_1$, $CW_1$ intersect in a single point called the \textit{second Morley center} (with the \textit{first Morley center} being just the center of the Morley triangle) (see~\cite{K94}). Thus, using conditions \ref{inner6} and \ref{brian} of Theorem~\ref{criterion}  in the configuration of Morley's theorem we can construct two conics $\mathcal C_1$ and $\mathcal C_2$ for which there exist two closed Poncelet chains $X_1Y_1Z_1$ and $X_2Y_2Z_2$. And Poncelet's theorem tells us that there are infinitely many such triangles for each starting point on $\mathcal C_1$! 

Hence, in the Morley configuration one can find two quite ``rare'' objects~-- a pair of conics for which Poncelet chains are closing up! For an arbitrary given pair of conics such a conclusion is not always the case. There is still much discussion about simple, like \textit{Fuss's formulas}, and powerful, like general \textit{Cayley's theorem}, criteria ensuring existence of a closed Poncelet chain for a given pair of conics (see~\cite[Ch. 10]{Fl09}, \cite{DrR11} for further details). 
	
\section{Closing remarks}

In fact, Theorem~\ref{criterion} in Morley's case gives us more, namely, that the intersections of the angular trisectors with the sides of the triangle belong to a single conic, which is a consequence of condition~\ref{outer6} of Theorem~\ref{criterion}.

Surprisingly, the same holds for any triple of \textit{isogonally conjugate} cevians, that is cevians symmetric with respect to the corresponding angle bisectors. More precisely, one can get the following

\begin{theorem}
\label{isog}
If $AA_1, AA_2$, $BB_1, BB_2$, and $CC_1, CC_2$ are three pairs of isogonally conjugate cevians of $\triangle ABC$, then condition~\ref{outer6} of Theorem~\ref{criterion} holds.
\end{theorem}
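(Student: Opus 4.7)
The plan is to verify condition~\ref{outer6} of Theorem~\ref{criterion} directly, by appealing to the converse of Carnot's theorem for conics: the six points $A_1,A_2\in BC$, $B_1,B_2\in CA$, $C_1,C_2\in AB$ lie on a single conic if and only if
$$\frac{\overline{BA_1}\cdot\overline{BA_2}}{\overline{CA_1}\cdot\overline{CA_2}}\cdot\frac{\overline{CB_1}\cdot\overline{CB_2}}{\overline{AB_1}\cdot\overline{AB_2}}\cdot\frac{\overline{AC_1}\cdot\overline{AC_2}}{\overline{BC_1}\cdot\overline{BC_2}}=1,$$
with signed ratios along each sideline. This reduces Theorem~\ref{isog} to a short trigonometric identity.

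First I would compute each factor by the Law of Sines. For any cevian $AA_i$ meeting $BC$ at $A_i$, comparing $\triangle ABA_i$ and $\triangle ACA_i$ yields
$$\frac{\overline{BA_i}}{\overline{CA_i}}=\frac{\sin\angle BAA_i}{\sin\angle CAA_i}\cdot\frac{\sin C}{\sin B}.$$
The isogonal conjugacy of $AA_1$ and $AA_2$ means $\angle BAA_1=\angle CAA_2$ and $\angle CAA_1=\angle BAA_2$, so multiplying the two instances for $i=1,2$ the sines in the numerator and denominator cancel pairwise, leaving
$$\frac{\overline{BA_1}\cdot\overline{BA_2}}{\overline{CA_1}\cdot\overline{CA_2}}=\frac{\sin^2 C}{\sin^2 B}.$$

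The analogous computation for the isogonal pairs at $B$ and at $C$ gives $\sin^2 A/\sin^2 C$ and $\sin^2 B/\sin^2 A$ respectively, and the three cyclic factors multiply to~$1$. Carnot's criterion then delivers condition~\ref{outer6} of Theorem~\ref{criterion}, which is precisely the claim of Theorem~\ref{isog}.

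I do not expect a real obstacle in this argument. The only mild care required is bookkeeping of signs in the Carnot product, so that pairs $A_i$ lying outside the segment $BC$ (which occur naturally for isogonal cevians when the triangle is obtuse or when one of the cevians is reflected past a side) are absorbed correctly; working throughout with directed ratios and oriented angles makes the cancellation of sines valid without case distinctions, so the trigonometric identity above goes through verbatim.
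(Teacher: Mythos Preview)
Your argument is correct. The Law-of-Sines identity
\[
\frac{\overline{BA_i}}{\overline{CA_i}}=\frac{\sin\angle BAA_i}{\sin\angle CAA_i}\cdot\frac{\sin C}{\sin B}
\]
is the standard ratio lemma, and the isogonal swap $\angle BAA_1\leftrightarrow\angle CAA_2$ indeed collapses the angular factor, so the Carnot product reduces to $\dfrac{\sin^2 C}{\sin^2 B}\cdot\dfrac{\sin^2 A}{\sin^2 C}\cdot\dfrac{\sin^2 B}{\sin^2 A}=1$. Your remark about working with directed ratios and oriented angles is exactly the right precaution and makes the computation uniform.

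This is, however, a genuinely different route from the one the paper sketches. The paper proposes to establish condition~\ref{brian} of Theorem~\ref{criterion} (that the six cevian lines are tangent to a common conic) by a coordinate computation, and then invoke the equivalence $\ref{brian}\Leftrightarrow\ref{outer6}$ proved in Theorem~\ref{criterion}. Your approach bypasses Theorem~\ref{criterion} altogether and attacks condition~\ref{outer6} directly via Carnot's criterion. What you gain is a self-contained, coordinate-free argument of a few lines that does not depend on the machinery of Theorem~\ref{criterion}; what the paper's route buys is that it exercises the main theorem and, incidentally, exhibits the inscribed conic $\mathcal C_2$ explicitly. It is worth noting that the paper does mention Carnot's theorem as an alternative for the companion isotomic result (Theorem~\ref{isotom}), so your method is very much in the spirit of the surrounding text.
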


This result can be easily proved using condition~\ref{brian} of Theorem~\ref{criterion} via straightforward calculation in coordinates; see also~\cite{Dol11}. 

The fact dual to Theorem~\ref{isog} is the following 

\begin{theorem}
\label{isotom}
Suppose $AA_1, AA_2$, $BB_1, BB_2$, and $CC_1, CC_2$ are three pairs of \emph{isotomically} conjugate cevians, that is cevians for which the pairs of points $A_1$, $A_2$, $B_1$, $B_2$, and $C_1$, $C_2$ are symmetric with respect to the midpoints of the corresponding sides; then condition~\ref{outer6} of Theorem~\ref{criterion} holds.
\end{theorem}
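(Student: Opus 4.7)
My plan is to verify condition~\ref{outer6} of Theorem~\ref{criterion} directly, by appealing to Carnot's classical criterion for six points on the sidelines of a triangle to be conconic: the points $A_1, A_2 \in BC$, $B_1, B_2 \in CA$, $C_1, C_2 \in AB$ (understood on the extended sidelines) lie on a common conic if and only if
\[
\frac{\overline{BA_1}}{\overline{A_1C}} \cdot \frac{\overline{BA_2}}{\overline{A_2C}} \cdot \frac{\overline{CB_1}}{\overline{B_1A}} \cdot \frac{\overline{CB_2}}{\overline{B_2A}} \cdot \frac{\overline{AC_1}}{\overline{C_1B}} \cdot \frac{\overline{AC_2}}{\overline{C_2B}} = 1,
\]
where all segments are directed. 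Since Carnot's theorem was already singled out in the remark after Theorem~\ref{criterion} as Barali\'c's tool in~\cite{Bar14}, it is the natural instrument in our setting. Note that this is the ``Pascal-side'' dual of the Brianchon-flavored approach used for the isogonal Theorem~\ref{isog}, which fits the general duality between Theorems~\ref{isog} and~\ref{isotom}.

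The main observation is that isotomic conjugation forces each of the three side-contributions in the Carnot product to equal $1$ on its own. Indeed, introduce a signed coordinate on the line $BC$ with $B \mapsto 0$ and $C \mapsto 1$; if $A_1$ corresponds to $\tau$ then by definition of isotomic conjugation $A_2$ corresponds to $1-\tau$, and a one-line computation gives
\[
\frac{\overline{BA_1}}{\overline{A_1C}} \cdot \frac{\overline{BA_2}}{\overline{A_2C}} = \frac{\tau}{1-\tau} \cdot \frac{1-\tau}{\tau} = 1.
\]
The same identity holds verbatim for the pairs $(B_1, B_2)$ on $CA$ and $(C_1, C_2)$ on $AB$.

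Multiplying these three equalities yields total Carnot product equal to $1$, whence by the (converse of) Carnot's theorem $A_1, A_2, B_1, B_2, C_1, C_2$ lie on a common conic, which is precisely condition~\ref{outer6}. I do not anticipate a genuine obstacle: the only point requiring care is that one of the isotomic conjugates may fall outside the corresponding side, but the signed convention absorbs this uniformly, since reflection in the midpoint simply interchanges the roles of the two factors in each pair. The argument is entirely parallel to the coordinate-based verification sketched for Theorem~\ref{isog}, but on the ``point'' side of the Pascal/Brianchon duality rather than the ``tangent'' side.
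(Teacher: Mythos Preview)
Your argument is correct and is precisely one of the two routes the paper indicates for this theorem: the paper does not spell out a proof but states that the result ``may be proved by a direct computation using Theorem~\ref{criterion}, or can be obtained as a corollary of Carnot's theorem,'' and you have carried out the Carnot route cleanly. The only cosmetic remark is that the signed-ratio cancellation you exhibit is exactly what the paper has in mind, so there is nothing to add.
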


On account of projective equivalence of the mentioned types of conjugacy we refer the reader to~\cite{Ak12}. Theorem~\ref{isotom} may be proved by a direct computation using Theorem~\ref{criterion}, or can be obtained as a corollary of Carnot's theorem (see~\cite{AkZ07}).

Finally, we can get another corollary of Theorem~\ref{criterion} ``for free'' (see~\cite{Gale96} and~\cite{Gib06} for an alternative approach).

\begin{theorem}
Let $P_1$ and $P_2$ be any two points inside $\triangle ABC$. If for any $i \in \{1,2\}$ cevians $AA_i$, $BB_i$, $CC_i$ are passing through $P_i$, then condition~\ref{outer6} of Theorem~\ref{criterion} holds.
\end{theorem}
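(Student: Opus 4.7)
My plan is to give the proof directly via Carnot's conic theorem. Condition~\ref{outer6} of Theorem~\ref{criterion} -- that $A_1, A_2, B_1, B_2, C_1, C_2$ lie on a single conic -- is characterised by Carnot's relation
\begin{equation*}
\prod_{i=1}^{2} \frac{\overline{BA_i}}{\overline{A_iC}} \cdot \frac{\overline{CB_i}}{\overline{B_iA}} \cdot \frac{\overline{AC_i}}{\overline{C_iB}} = 1,
\end{equation*}
where the bars denote signed ratios on the sidelines of $\triangle ABC$. By hypothesis each triple $AA_i$, $BB_i$, $CC_i$ is concurrent at $P_i$, and Ceva's theorem then makes the $i$-th factor in the product equal to $1$ for both $i=1$ and $i=2$. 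Multiplying these two equalities gives precisely Carnot's relation, and hence the six base points are conconic.

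From the vantage point of Theorem~\ref{criterion} itself, this corollary is essentially automatic and explains the ``for free'' phrasing. Under the hypothesis the two triples of concurrent cevians make the three inner intersection points of each triple coincide: since $AA_1, BB_1, CC_1$ all pass through $P_1$, we have $X_1 = Y_1 = Z_1 = P_1$, and likewise $X_2 = Y_2 = Z_2 = P_2$. The ``six'' inner points thus reduce to only two, and any conic through $P_1$ and $P_2$ -- the pencil of such conics is of course nonempty -- trivially satisfies condition~\ref{inner6}. The equivalence $\ref{inner6}\Leftrightarrow\ref{outer6}$ then delivers the conclusion.

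The main obstacle in this second, more elegant route is that the proof of Theorem~\ref{criterion} treated the configuration in general position only, and precisely this kind of degenerate case -- where the inner triangles shrink to single points -- was explicitly postponed at the end of the proof. A full justification via the criterion would require a limiting form of Pascal's theorem in which three adjacent hexagon sides through a coalesced vertex are replaced by the tangent to the conic at that vertex, which is standard projective fare but needs to be spelled out. The Carnot--Ceva derivation of the first paragraph bypasses this subtlety entirely and should serve as the proof of record, while the criterion-based argument is best treated as an illuminating remark.
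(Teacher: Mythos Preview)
Your Carnot--Ceva argument is correct and complete. The paper, however, takes precisely what you call the ``second, more elegant route'': it observes that $X_i=Y_i=Z_i=P_i$ and invokes condition~\ref{inner6} of Theorem~\ref{criterion} for the degenerate conic $\mathcal C_1$ given by the doubly covered line through $P_1$ and $P_2$, then appeals to the equivalence $\ref{inner6}\Leftrightarrow\ref{outer6}$.

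So your primary proof is a genuinely different approach. What it buys is exactly what you identify: it is self-contained and sidesteps the degeneracy issue entirely, since Carnot's relation and Ceva's theorem are both stated and proved for signed ratios with no generic-position hypotheses. The paper's route is shorter on the page but, as you correctly point out, leans on Theorem~\ref{criterion} in a degenerate configuration that the proof of that theorem explicitly postponed. Your caution here is well placed; the paper is content to treat the degenerate case as ``easily restored by the reader,'' whereas you have given an argument that needs no such restoration. One small refinement: rather than ``any conic through $P_1$ and $P_2$,'' the paper singles out the doubled line, which is the natural limit of $\mathcal C_1$ as the two inner triangles collapse---worth mentioning if you keep the criterion-based remark.
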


\begin{proof}
Since in this case $X_i = Y_i = Z_i = P_i$ for any $i \in \{1,2\}$, condition~\ref{inner6} of Theorem~\ref{criterion} holds trivially for a degenerate conic $\mathcal C_1$ being the (doubly covered) line passing through $P_1$ and $P_2$.
\end{proof}

\section*{Acknowledgment}
The author would like to thank Yuliya V. Eremenko for her guidance during the preparation for the Regional scientific contest in 2007, during which a part of this work was done, and for all her help ever since.

\medskip

\textsc{Kostiantyn Drach}:
\textit{V.N. Karazin Kharkiv National University, Geometry Department,}

\textit{Svobody Sq. 4, 61022, Ukraine; and}

\textit{Sumy State University, Department of Mathematical Analysis and Optimization,}

\textit{Rimskogo--Korsakova str. 2, 40007, Ukraine}

\textit{E-mail:} \textsf{drach@karazin.ua}

\end{document}